\newtheorem{theorem}{Theorem}[section]
\newtheorem{corollary}[theorem]{Corollary}
\newtheorem{lemma}[theorem]{Lemma}
\theoremstyle{definition}
\newtheorem{remark}[theorem]{Remark}
\numberwithin{equation}{section}
\email{\tt mario.milman@gmail.com}
\keywords{Poincar\'{e}-Sobolev inequalities, Self-Improvement}
\subjclass[2010]{46E35, 42B37,42B37}
\begin{document}
\title[Poincar\'{e}-Sobolev inequalities via Garsia-Rodemich spaces ]{A note
on self improvement of Poincar\'{e}-Sobolev inequalities via Garsia-Rodemich
spaces}
\author[M. Milman]{Mario Milman}
\address{Instituto Argentino de Matematica, Argentina}

\begin{abstract}
We use the characterization of weak type inequalities via Garsia-Rodemich
conditions to show self improving properties of Poincar\'{e}-Sobolev
inequalities in a very general context.
\end{abstract}

\maketitle

\section{Introduction and Summary}

In this note we develop a new method to prove self-improving inequalities
involving oscillations through the use of Garsia-Rodemich spaces. Although
we shall apply the method to known self improving results concerning
classical Sobolev-Poincar\'{e} inequalities\footnote{%
We refer to \cite{milbmo} for the corresponding study of self-improvement of 
$BMO$ inequalities.}, we believe that our method can be useful in other
contexts as well.

One of basic problems we face here can be simply described as follows: How
can we extract information about the size of a function in terms of its
oscillations? The main ideas are classical and the fundamentals go back, at
least, to the seminal papers of Calder\'{o}n-Zygmund.

The origin of the self improving results considered in this note goes back
to the classical paper by John-Nirenberg \cite{jn}, where they introduced
the space $BMO$. Their methods were later refined by many authors (cf. \cite%
{berkovi} and the references therein). Somewhat less known are some ideas
that were developed by Garsia-Rodemich \cite{garro}. One possible reason
that the methods of \cite{garro} are less known to the community of self
improvers is the fact that the main objective of \cite{garro} lies
elsewhere, moreover, the relevant results for us are only sketched at the
end of \cite{garro}, and then only in the one dimensional case.

We addressed some of these issues in \cite{milbmo} where, in particular, we
extended the Garsia-Rodemich embedding to the $n-$dimensional case (cf.
Theorem \ref{teomarkao}).

In this note we use Garsia-Rodemich spaces to study Poincar\'{e}
inequalities in a very general context.

We shall start by recalling a construction of John-Nirenberg \cite{jn}. Let $%
Q_{0}\subset\mathbb{R}^{n},$ be a fixed cube\footnote{%
A \textquotedblleft cube" in this paper will always mean a cube with sides
parallel to the coordinate axes.}, and let 
\begin{align*}
P(Q_{0}) & =\{\{Q_{i}\}_{i\in N}:\text{countable families of subcubes }%
Q_{i}\subset Q_{0},\text{ } \\
& \text{with pairwise disjoint interiors}\}.
\end{align*}
Let $1<p<\infty.$ The John-Nirenberg spaces $JN_{p}(Q_{0})$ consist of all
functions $f\in L^{1}(Q_{0})$ such that\footnote{%
Here $f_{Q}=\frac {1}{\left\vert Q\right\vert }\int_{Q}fdx.$} (cf. \cite{jn}%
, \cite{torchinsky})%
\begin{equation}
\left\Vert f\right\Vert _{JN_{p}(Q_{0})}=\sup_{\{Q_{i}\}_{i\in N}\in
P(Q_{0})}\left\{ \dsum \limits_{i}\left\vert Q_{i}\right\vert \left( \frac{1%
}{\left\vert Q_{i}\right\vert }\int_{Q_{i}}\left\vert f-f_{Q_{i}}\right\vert
dx\right) ^{p}\right\} ^{1/p}<\infty.  \label{lajnp}
\end{equation}
John-Nirenberg \cite{jn} go on to show that%
\begin{equation}
JN_{p}(Q_{0})\subset L(p,\infty)(Q_{0}).  \label{ellimite}
\end{equation}
Thus, the $JN_{p}(Q_{0})$ condition implies the following \textquotedblleft
self-improvement": For $f\in L^{1},$ control of its $L^{1}$ oscillations, $%
\frac{1}{\left\vert Q\right\vert }\int_{Q}\left\vert f-f_{Q}\right\vert dx,$
as prescribed by (\ref{lajnp}), allows us to conclude that $f$ belongs to
the better space $L(p,\infty)$ (cf. (\ref{ellimite}))$.$ When $p\rightarrow
\infty,$ then, informally, we have $JN_{p}(Q_{0})\rightarrow BMO(Q_{0}),$
and the corresponding limiting self-improvement is expressed by the well
known John-Nirenberg Lemma \cite{jn}: Functions in $BMO$ are exponentially
integrable. Again informally, this later result corresponds to let $%
p\rightarrow\infty$ in (\ref{ellimite})$,$ and can be formulated\footnote{%
We use the somewhat unconventional notation $L(\infty,\infty)$ (also often
denoted by $W$) to define the weak-$L^{\infty}$ space (cf. \cite{bs})%
\begin{equation*}
L(\infty,\infty)=\{f:\sup_{t}\{f^{\ast\ast}(t)-f^{\ast}(t)\}<\infty\}.
\end{equation*}
Here $f^{\ast}$ denotes the non-increasing rearrangement of $f$ $\ $and $%
f^{\ast\ast}(t)=\frac{1}{t}\int_{0}^{t}f^{\ast}(s)ds,$ (cf. \cite{bs}). As
was shown in \cite{bds}$,$ $L(\infty,\infty)$ is the \textquotedblleft
rearrangement invariant hull" of $BMO.$ For further generalizations cf. \cite%
{ep}.} as%
\begin{equation}
BMO(Q_{0})\subset L(\infty,\infty)(Q_{0}).  \label{elotro}
\end{equation}

Roughly speaking, the embeddings (\ref{ellimite}), (\ref{elotro}), are the
mechanism used by John-Nirenberg to prove self improvement when we have
control of the oscillations (cf. \cite{jn}, \cite{torchinsky} and the more
recent expansive survey given in \cite{berkovi}, which contains references
to many important contributions to the topic treated in this note).

In the seventies, Garsia-Rodemich \cite{garro} introduced the closely
related spaces $GaRo_{p}(Q_{0}),1<p\leq\infty,$ whose definition we now
recall. We shall say that $f\in GaRo_{p}(Q_{0}),$ if and only if $f\in
L^{1}(Q_{0}),$ and $\exists C>0$ such that for all $\{Q_{i}\}_{i\in N}$ $\in
P(Q_{0})$ we have 
\begin{equation}
\dsum \limits_{i}\frac{1}{\left\vert Q_{i}\right\vert }\int_{Q_{i}}%
\int_{Q_{i}}\left\vert f(x)-f(y)\right\vert dxdy\leq C\left( \dsum
\limits_{i}\left\vert Q_{i}\right\vert \right) ^{1/p^{\prime}},\text{ where }%
1/p^{\prime}=1-1/p,  \label{dada}
\end{equation}
and we let%
\begin{equation*}
\left\Vert f\right\Vert _{GaRo_{p}(Q_{0})}=\inf\{C:\text{such that }(\text{%
\ref{dada}})\text{ holds}\}.
\end{equation*}
It is readily seen that (cf. \cite{milbmo}), 
\begin{equation}
JN_{p}(Q_{0})\subset GaRo_{p}(Q_{0}).  \label{la mala}
\end{equation}
A remarkable result of Garsia-Rodemich shows that (cf. \cite{garro}, and 
\cite{milbmo} for the $n-$dimensional version of the result that we use
here) as sets,%
\begin{equation}
GaRo_{p}(Q_{0})=L(p,\infty)(Q_{0}).  \label{labuena}
\end{equation}
Therefore, the gist of the matter is that the weak type spaces $L(p,\infty)$
themselves can be characterized by oscillation conditions! In other words,
the underlying method to prove (\ref{labuena}) provides an effective method
to compute the weak type norm of a function if we have control of its
oscillations, and avoids the (somewhat harder!) intermediate step of showing
the $JN_{p}(Q_{0})$ condition. As a bonus, we will also show that, when
applied to the self improvement of Poincar\'{e}-Sobolev inequalities, this
method leads to sharp results\footnote{%
Our results in this direction ought to be compared with those presented in
the recent survey \cite{berkovi} and the references therein (cf. e.g.
Corollary 3.5 in \cite{berkovi}).}.

In the last section of this note we included a brief discussion of related
methods that can be used to study self-improving inequalities; e.g. methods
based on rearrangement inequalities (cf. \cite{Kalis}), methods based on $K-$%
functional inequalities as they relate to reverse H\"{o}lder inequalities
(cf. \cite{milfen}, \cite{masmil}, \cite{mami}), and $K-$functional
inequalities applied to Poincar\'{e}-Sobolev (cf. \cite{mamicalixto} and 
\cite{corita})).

Finally, we refer to \cite{bs} for background information on rearrangements
and covering lemmas.

\section{$GaRo_{p}=L(p,\infty)$}

We consider a qualitative version of the Garsia-Rodemich \cite{garro}
equality 
\begin{equation*}
GaRo_{p}=L(p,\infty).
\end{equation*}
We start recalling the $n$ dimensional version as given in \cite{milbmo}.

\begin{theorem}
\label{teomarkao}Let $1<p<\infty,$ and let $Q_{0}\subset R^{n}$ be a fixed
cube. Then

(i) $JN_{p}(Q_{0})\subset GaRo_{p}(Q_{0}).$ In fact,%
\begin{equation*}
\left\Vert f\right\Vert _{GaRo_{p}(Q_{0})}\leq2JN_{p}(f,Q_{0}).
\end{equation*}

(ii) $GaRo_{p}(Q_{0})=L(p,\infty)(Q_{0}).$ In fact, if we let%
\begin{equation*}
\left\Vert f\right\Vert _{L(p,\infty)}^{\ast}=\sup_{t}f^{\ast}(t)t^{1/p},
\end{equation*}
then we have,%
\begin{align}
\left\Vert f\right\Vert _{GaRo_{p}(Q_{0})} & \leq\frac{2p}{p-1}\left\Vert
f\right\Vert _{L(p,\infty)}^{\ast},\text{ }  \label{ga<} \\
\sup_{t}t^{1/p}\left( f^{\ast\ast}(t)-f^{\ast}(t)\right) & \leq
2^{n/p^{\prime}+1}\left\Vert f\right\Vert _{GaRo_{p}(Q_{0})}+\left( \frac {4%
}{\left\vert Q_{0}\right\vert }\right) ^{1/p^{\prime}}\left\Vert
f\right\Vert _{L^{1}}.  \label{ga>}
\end{align}
\end{theorem}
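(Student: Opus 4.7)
For part (i) the idea is to apply the pointwise triangle inequality $|f(x)-f(y)|\le |f(x)-f_{Q_i}|+|f(y)-f_{Q_i}|$ on each $Q_i\times Q_i$; integrating produces
\[
\frac{1}{|Q_i|}\int_{Q_i}\!\!\int_{Q_i}|f(x)-f(y)|\,dx\,dy\le 2|Q_i|\,a_i,\qquad a_i=\frac{1}{|Q_i|}\int_{Q_i}|f-f_{Q_i}|\,dx,
\]
and then H\"older's inequality in the form $\sum_i|Q_i|\,a_i\le(\sum_i|Q_i|\,a_i^{p})^{1/p}(\sum_i|Q_i|)^{1/p'}$ identifies the first factor with $JN_p(f,Q_0)$, giving the claimed constant $2$.

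For the first inequality of part (ii) the same framework applies with the cruder $|f(x)-f(y)|\le |f(x)|+|f(y)|$. Integration and summation reduce the problem to $\sum_i\int_{Q_i}|f|\,dx$, which equals $\int_{\cup_iQ_i}|f|\,dx$ by pairwise disjointness. The Hardy--Littlewood rearrangement inequality bounds this by $\int_0^{\sum|Q_i|}f^*(s)\,ds$; inserting $f^*(s)\le\|f\|^*_{L(p,\infty)}s^{-1/p}$ and integrating explicitly gives the factor $\frac{p}{p-1}(\sum|Q_i|)^{1/p'}$, producing the claimed constant $\frac{2p}{p-1}$.

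For the second inequality of part (ii), the deep direction, I would fix $t>0$, set $\lambda=f^*(t)$, and rewrite
\[
t\bigl(f^{**}(t)-f^*(t)\bigr)=\int_0^t(f^*(s)-\lambda)\,ds\le\int_{\{|f|>\lambda\}}(|f|-\lambda)\,dx.
\]
For $t\ge|Q_0|/4$ the crude bound $\int_0^t(f^*-\lambda)\,ds\le\|f\|_{L^1}$ yields, after multiplying by $t^{1/p-1}$, the $(4/|Q_0|)^{1/p'}\|f\|_{L^1}$ contribution (an elementary comparison of $t^{1/p}/|Q_0|$ with $(4/|Q_0|)^{1/p'}$ takes care of the constants). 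For $t<|Q_0|/4$ I would perform a Calder\'on--Zygmund decomposition of $|f|$ on $Q_0$ at level $\lambda$ (replacing $\lambda$ by $\max(\lambda,|f|_{Q_0})$ when needed, absorbing the deficit into the $\|f\|_{L^1}$ term), producing disjoint dyadic subcubes $\{Q_i\}$ with $\lambda<|f|_{Q_i}\le 2^n\lambda$ and $\{|f|>\lambda\}\subset\cup Q_i$ a.e. On each $Q_i$ I split $(|f|-\lambda)_+\le(|f|-|f|_{Q_i})_+ +(|f|_{Q_i}-\lambda)$; the oscillation part is controlled by Jensen plus the GaRo condition applied to $\{Q_i\}$, while the overflow $|f|_{Q_i}-\lambda\le|f|_{Q_i}-|f|_{\tilde Q_i}$ (using that the CZ parent $\tilde Q_i$ satisfies $|f|_{\tilde Q_i}\le\lambda$) is realized as an oscillation and again summed through a GaRo-type estimate at the parent scale.

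The main technical obstacle will be this overflow step: a naive bound $|f|_{Q_i}-\lambda\le(2^n-1)\lambda$ is useless because it produces $(2^n-1)\lambda\sum|Q_i|$ and loses the $t$-dependence entirely. The correct move is to recognize the overflow as a ``parent-scale'' mean oscillation, convert it via the identity $(|f|_{Q_i}-|f|_{\tilde Q_i})|Q_i|\le\frac{1}{|\tilde Q_i|}\int_{\tilde Q_i}\!\int_{\tilde Q_i}|f(x)-f(y)|\,dx\,dy$, and then sum this against GaRo while handling the fact that the parents $\{\tilde Q_i\}$ may nest (bounded overlap suffices, since each parent is the parent of at most $2^n$ CZ cubes). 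The constant $2^{n/p'+1}$ in (\ref{ga>}) is precisely the $|\tilde Q_i|=2^n|Q_i|$ factor, raised to the power $1/p'$ via the $(\sum|\tilde Q_i|)^{1/p'}$ in GaRo, multiplied by an additional factor of $2$ arising from the Jensen/oscillation step.
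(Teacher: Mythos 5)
First, a point of reference: the paper does not actually prove Theorem \ref{teomarkao}; it is imported verbatim from \cite{milbmo}, so the comparison is with the standard argument there. Your treatments of (i) and of (\ref{ga<}) are correct and essentially the canonical ones: the triangle inequality through $f_{Q_i}$ followed by H\"older with weights $|Q_i|$ gives (i) with constant $2$, and $|f(x)-f(y)|\le |f(x)|+|f(y)|$ together with $\int_{\cup Q_i}|f|\le\int_0^{\sum|Q_i|}f^*(s)\,ds$ gives (\ref{ga<}) with constant $2p/(p-1)$.

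Your sketch of (\ref{ga>}) has the right skeleton (the identity $t(f^{**}(t)-f^*(t))=\int_{\{|f|>f^*(t)\}}(|f|-f^*(t))\,dx$, the range $t\ge|Q_0|/4$ handled by $tf^{**}(t)\le\|f\|_{L^1}$, a stopping-time decomposition for small $t$), but two steps are genuinely broken. First, with the Calder\'on--Zygmund decomposition of $|f|$ at level $\lambda=f^*(t)$ (cubes with $\lambda<|f|_{Q_i}\le 2^n\lambda$) there is no bound $\sum_i|Q_i|\le Ct$; one only gets $\sum_i|Q_i|\le\|f\|_{L^1}/\lambda$. Since the $GaRo_p$ condition returns a factor $(\sum_i|Q_i|)^{1/p'}$ and the target requires $t^{1/p'}$, this is fatal: for $f=A\chi_{[0,\varepsilon]}+1$ on $Q_0=[0,1]$ and $t=2\varepsilon$, the CZ cubes at level $\max(f^*(t),|f|_{Q_0})$ include $[0,1/2]$, so your chain only yields $t^{1/p}(f^{**}(t)-f^*(t))\le C(1/t)^{1/p'}\|f\|_{GaRo_p}+\cdots$, which does not prove the theorem as $\varepsilon\to0$. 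The decomposition must instead be run on the density of the level set $E=\{|f|>f^*(t)\}$ (maximal dyadic $Q$ with $|Q\cap E|>|Q|/2$), which forces $\sum|Q_i|\le 2|E|\le 2t$ and, through the parent, supplies a set of proportional measure on which $|f|\le f^*(t)$. Second, the family of parents $\{\tilde Q_i\}$ is not a packing: distinct parents can be strictly nested (a parent can sit inside a sibling of another selected cube), and chains of nested parents of arbitrary length through a single point can occur, so the overlap is not bounded by $2^n$ --- that bound only controls how many $Q_i$ share one and the same parent. Since the $GaRo_p$ condition is available only for families with pairwise disjoint interiors, and a nested chain cannot be split into boundedly many disjoint subfamilies, you cannot sum your parent-scale estimates against the $GaRo_p$ condition as proposed. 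The standard repair is to select the enlarged cubes directly so that they form a packing --- e.g.\ take the maximal dyadic cubes possessing a child of $E$-density greater than $1/2$; these are pairwise disjoint, cover $E$ a.e., have total measure at most $2^{n+1}t$, and each has $E$-density at most $1/2$ --- and only then invoke the $GaRo_p$ condition; a covering lemma of exactly this type is what produces the dimensional constant in (\ref{ga>}).
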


The following form of Theorem \ref{teomarkao} will be useful for the
applications we develop in this note.

\begin{corollary}
\label{coromarkao}Let $1<p\,<\infty .$ Then,%
\begin{equation*}
\left\Vert f-f_{Q_{0}}\right\Vert _{GaRo_{p}(Q_{0})}\leq \frac{2p}{p-1}%
\left\Vert f-f_{Q_{0}}\right\Vert _{L(p,\infty )}^{\ast },
\end{equation*}%
\begin{equation}
\left\Vert f-f_{Q_{0}}\right\Vert _{L(p,\infty )}^{\ast }\leq
c(n,p)\left\Vert f-f_{Q_{0}}\right\Vert _{GaRo_{p}(Q_{0})}.  \label{bela}
\end{equation}
\end{corollary}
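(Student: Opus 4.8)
The plan is to derive Corollary~\ref{coromarkao} directly from Theorem~\ref{teomarkao} by substituting $g = f - f_{Q_0}$ in place of $f$. The first inequality is immediate: applying \eqref{ga<} to $g$ gives
\[
\left\Vert f-f_{Q_{0}}\right\Vert _{GaRo_{p}(Q_{0})}\leq\frac{2p}{p-1}\left\Vert f-f_{Q_{0}}\right\Vert _{L(p,\infty)}^{\ast},
\]
since $GaRo_p$ and the functional $\left\Vert\cdot\right\Vert_{L(p,\infty)}^{\ast}$ are applied verbatim to $g$. No further work is needed here.

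For the second inequality \eqref{bela}, I would start from \eqref{ga>} applied to $g = f - f_{Q_0}$, which yields
\[
\sup_{t}t^{1/p}\left(g^{\ast\ast}(t)-g^{\ast}(t)\right)\leq 2^{n/p^{\prime}+1}\left\Vert f-f_{Q_{0}}\right\Vert _{GaRo_{p}(Q_{0})}+\left(\frac{4}{\left\vert Q_{0}\right\vert}\right)^{1/p^{\prime}}\left\Vert f-f_{Q_{0}}\right\Vert_{L^{1}}.
\]
The task is then to (a) replace the left-hand functional $\sup_t t^{1/p}(g^{\ast\ast}(t)-g^{\ast}(t))$ by the simpler $\left\Vert g\right\Vert_{L(p,\infty)}^{\ast} = \sup_t t^{1/p} g^{\ast}(t)$, and (b) absorb the trailing $L^1$ term into the $GaRo_p$ norm. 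For (a), I would invoke the standard fact (see \cite{bs}) that for $g$ with mean value zero over $Q_0$ one has $g^{\ast\ast}(|Q_0|) = 0$, and that the quasi-norm built from $f^{\ast\ast}-f^{\ast}$ controls $f^{\ast}$ up to the "tail at $t = |Q_0|$"; more precisely, using the identity $-\frac{d}{dt}f^{\ast\ast}(t) = \frac{1}{t}(f^{\ast\ast}(t)-f^{\ast}(t))$ and integrating from $t$ to $|Q_0|$, one recovers $g^{\ast\ast}(t) \le \int_t^{|Q_0|} s^{-1}(g^{\ast\ast}(s)-g^{\ast}(s))\,\frac{ds}{s}\cdot(\dots)$, so that $\sup_t t^{1/p}g^{\ast}(t) \le \sup_t t^{1/p}g^{\ast\ast}(t) \lesssim_p \sup_t t^{1/p}(g^{\ast\ast}(t)-g^{\ast}(t))$. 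For (b), the crude bound $\left\Vert f-f_{Q_0}\right\Vert_{L^1} \le |Q_0|^{1/p'}\left\Vert f-f_{Q_0}\right\Vert_{L(p,\infty)}^{\ast}$ combined with the already-proven first inequality of the Corollary would close a loop, or — more cleanly — one bounds $\left\Vert f-f_{Q_0}\right\Vert_{L^1}$ directly by a constant times $\left\Vert f-f_{Q_0}\right\Vert_{GaRo_p(Q_0)}$ by testing the $GaRo_p$ condition on the trivial partition $\{Q_0\}$ together with $\int_{Q_0}|g|\le \frac{1}{|Q_0|}\int_{Q_0}\int_{Q_0}|g(x)-g(y)|\,dx\,dy$ (valid since $g$ has mean zero). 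Collecting constants gives $c(n,p)$.

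The main obstacle I anticipate is step (a): passing from the $f^{\ast\ast}-f^{\ast}$ functional back to $f^{\ast}$ requires care because the inequality is only valid when the "boundary term'' at $t=|Q_0|$ vanishes, which is precisely why the corollary is stated for $f-f_{Q_0}$ rather than for general $f$. One must verify that $(f-f_{Q_0})^{\ast\ast}(|Q_0|)=0$ and that the integration of the logarithmic derivative identity converges; the factor $p$ from integrating $s^{1/p-1}$ is exactly what produces the $p$-dependence absorbed into $c(n,p)$. Everything else is bookkeeping of constants, and the value of $c(n,p)$ can be read off as $c(n,p) = 2^{n/p'+1} + (\text{constant depending on }p)$ after combining the two steps.
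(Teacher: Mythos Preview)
Your overall strategy matches the paper's proof almost exactly: apply \eqref{ga<} to $g=f-f_{Q_0}$ for the first inequality; for the second, apply \eqref{ga>} to $g$, integrate the identity $-\frac{d}{dt}g^{\ast\ast}(t)=\frac{1}{t}(g^{\ast\ast}(t)-g^{\ast}(t))$ to recover $g^{\ast\ast}$ from the oscillation functional, and then bound $\|g\|_{L^1}$ via the $GaRo_p$ condition tested on the trivial family $\{Q_0\}$ together with the mean-zero of $g$. Your step (b), in its second (``cleaner'') version, is precisely what the paper does.

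There is, however, one genuine slip in step (a). You assert that $g^{\ast\ast}(|Q_0|)=0$ because $g$ has mean zero. This is false: $g^{\ast\ast}(|Q_0|)=\frac{1}{|Q_0|}\int_{Q_0}|g|\,dx$, which vanishes only if $g\equiv 0$. Mean zero gives $\int_{Q_0}g=0$, not $\int_{Q_0}|g|=0$. So integrating the derivative identity from $t$ to $|Q_0|$ leaves the boundary term $g^{\ast\ast}(|Q_0|)=\|g\|_{L^1}/|Q_0|$, which does not disappear. The paper sidesteps this by integrating from $t$ to $\infty$ instead, using that $g\in L^1(Q_0)$ implies $g^{\ast\ast}(t)\to 0$ as $t\to\infty$; then
\[
g^{\ast\ast}(t)=\int_t^\infty\bigl(g^{\ast\ast}(s)-g^{\ast}(s)\bigr)\frac{ds}{s}
\le \Bigl(2^{n/p'+1}\|g\|_{GaRo_p}+\bigl(\tfrac{4}{|Q_0|}\bigr)^{1/p'}\|g\|_{L^1}\Bigr)\int_t^\infty s^{-1/p}\,\frac{ds}{s},
\]
and the integral equals $p\,t^{-1/p}$. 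Your route can be repaired either by this change of upper limit, or by keeping the boundary term and folding $g^{\ast\ast}(|Q_0|)$ into the $\|g\|_{L^1}$ contribution you already carry; either way the argument then closes exactly as you describe. (Also note your displayed integral has a stray extra factor of $s^{-1}$.)
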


\begin{proof}
The first inequality follows applying (\ref{ga<}) to $f-f_{Q_{0}}.$ To prove
(\ref{bela}), let $g=f-f_{Q_{0}}.$ Then, since $g\in L^{1}(Q_{0}),$ we see
that $g^{\ast\ast}(t)\rightarrow0$ as $t\rightarrow\infty.$ Therefore, by
the fundamental theorem of calculus, we can write\footnote{%
Recall that $\frac {d}{dt}\left( g^{\ast\ast}(t)\right) =\frac{\left(
g^{\ast}(t)-g^{\ast\ast }(t)\right) }{t}.$}%
\begin{equation*}
g^{\ast\ast}(t)=\int_{t}^{\infty}\left( g^{\ast\ast}(s)-g^{\ast}(s)\right) 
\frac{ds}{s}.
\end{equation*}
Combining with $($\ref{ga>}$)$ we find%
\begin{align*}
g^{\ast\ast}(t) & \leq c\left\Vert g\right\Vert
_{GaRo_{p}(Q_{0})}\int_{t}^{\infty}s^{-1/p}\frac{ds}{s}+\left( \frac{4}{%
\left\vert Q_{0}\right\vert }\right) ^{1/p^{\prime}}\left\Vert g\right\Vert
_{L^{1}}\int_{t}^{\infty }s^{-1/p}\frac{ds}{s} \\
& =p\left( c\left\Vert g\right\Vert _{GaRo_{p}(Q_{0})}+\left( \frac {4}{%
\left\vert Q_{0}\right\vert }\right) ^{1/p^{\prime}}\left\Vert g\right\Vert
_{L^{1}}\right) t^{-1/p}.
\end{align*}

Thus,%
\begin{equation}
\left\Vert g\right\Vert _{L(p,\infty )}^{\ast }\leq \sup_{t}g^{\ast \ast
}(t)t^{1/p}\leq p\left( c\left\Vert g\right\Vert _{GaRo_{p}(Q_{0})}+\left( 
\frac{4}{\left\vert Q_{0}\right\vert }\right) ^{1/p^{\prime }}\left\Vert
g\right\Vert _{L^{1}}\right) .  \label{laboba}
\end{equation}%
Now, since $\int_{Q_{0}}g=0,$ and $\{Q_{0}\}\in P(Q_{0}),$%
\begin{align*}
\int_{Q_{0}}\left\vert g(x)\right\vert dx& =\int_{Q_{0}}\left\vert g(x)-%
\frac{1}{\left\vert Q_{0}\right\vert }\int_{Q_{0}}g(y)dy\right\vert dx \\
& =\int_{Q_{0}}\frac{1}{\left\vert Q_{0}\right\vert }\left\vert
\int_{Q_{0}}(g(x)-g(y))dy\right\vert dx \\
& \leq \frac{1}{\left\vert Q_{0}\right\vert }\int_{Q_{0}}\int_{Q_{0}}\left%
\vert g(x)-g(y)\right\vert dydx \\
& \leq \left\vert Q_{0}\right\vert ^{1/p^{\prime }}\left\Vert g\right\Vert
_{GaRo_{p}(Q_{0})}.
\end{align*}%
Inserting this information in (\ref{laboba}) we find,%
\begin{equation*}
\left\Vert g\right\Vert _{L(p,\infty )}^{\ast }\leq p\left( c+\left( \frac{4%
}{\left\vert Q_{0}\right\vert }\right) ^{1/p^{\prime }}\left\vert
Q_{0}\right\vert ^{1/p^{\prime }}\right) \left\Vert g\right\Vert
_{GaRo_{p}(Q_{0})},
\end{equation*}%
concluding the proof.
\end{proof}

For further use below let us also note the corresponding end point result
for $p=\infty$.

\begin{lemma}
\begin{equation}
GaRo_{\infty}(Q_{0})=BMO(Q_{0}).  \label{barata}
\end{equation}
\end{lemma}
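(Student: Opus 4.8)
The plan is to prove the equality $GaRo_\infty(Q_0)=BMO(Q_0)$ by establishing the two inclusions separately, each with the corresponding norm comparison.

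For the inclusion $BMO(Q_0)\subset GaRo_\infty(Q_0)$, I would start from the definition of $GaRo_p$ with $p=\infty$, which forces $1/p'=1$, so the defining condition \eqref{dada} reads $\sum_i \frac{1}{|Q_i|}\int_{Q_i}\int_{Q_i}|f(x)-f(y)|\,dx\,dy \le C\sum_i |Q_i|$. Since this must hold for every disjoint family, it is equivalent to the termwise estimate $\frac{1}{|Q_i|}\int_{Q_i}\int_{Q_i}|f(x)-f(y)|\,dx\,dy \le C|Q_i|$ for each subcube $Q$. Using $|f(x)-f(y)|\le |f(x)-f_Q|+|f(y)-f_Q|$ and symmetry, the left side is bounded by $2\int_Q|f-f_Q|\,dx\le 2|Q|\,\|f\|_{BMO}$, giving $\|f\|_{GaRo_\infty(Q_0)}\le 2\|f\|_{BMO(Q_0)}$.

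For the reverse inclusion $GaRo_\infty(Q_0)\subset BMO(Q_0)$, I would apply the defining condition \eqref{dada} to the single-cube family $\{Q\}\in P(Q_0)$ for an arbitrary subcube $Q\subset Q_0$, obtaining $\frac{1}{|Q|}\int_Q\int_Q|f(x)-f(y)|\,dx\,dy\le \|f\|_{GaRo_\infty(Q_0)}\,|Q|$. As in the proof of Corollary \ref{coromarkao}, the oscillation $\int_Q|f-f_Q|\,dx = \int_Q\big|\frac{1}{|Q|}\int_Q(f(x)-f(y))\,dy\big|\,dx$ is bounded by $\frac{1}{|Q|}\int_Q\int_Q|f(x)-f(y)|\,dy\,dx$, so $\frac{1}{|Q|}\int_Q|f-f_Q|\,dx\le \|f\|_{GaRo_\infty(Q_0)}$. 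Taking the supremum over all subcubes $Q$ yields $\|f\|_{BMO(Q_0)}\le \|f\|_{GaRo_\infty(Q_0)}$.

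This argument is essentially a limiting case of the reasoning already developed for Theorem \ref{teomarkao} and Corollary \ref{coromarkao}, and I do not anticipate a serious obstacle; the only point requiring slight care is the equivalence between the summed condition \eqref{dada} and its termwise version when $p=\infty$, which follows simply because the inequality is additive over disjoint subfamilies and homogeneous of the same degree on both sides. One could alternatively deduce the result directly by letting $p\to\infty$ in Theorem \ref{teomarkao}, but giving the short self-contained argument above is cleaner and avoids worrying about uniformity of constants.
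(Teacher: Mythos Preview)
Your argument is correct and follows essentially the same route as the paper. The only minor presentational difference is that the paper first invokes the well-known equivalent $BMO$ norm $\|f\|_{*}=\sup_{Q\subset Q_0}\frac{1}{|Q|^{2}}\int_Q\int_Q|f(x)-f(y)|\,dx\,dy$, after which both inclusions become one-line identities with constant $1$; you instead work with the standard oscillation norm and derive the same comparison inline via the triangle inequality and Jensen, picking up a harmless factor $2$ in the $BMO\subset GaRo_\infty$ direction.
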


\begin{proof}
First let us note that, as is well known, and readily verified (cf. \cite%
{garro}), 
\begin{equation}
f\in BMO(Q_{0})\Leftrightarrow\left\Vert f\right\Vert _{\ast}=\sup_{Q\subset
Q_{0}}\frac{1}{\left\vert Q\right\vert ^{2}}\int_{Q}\int_{Q}\left\vert
f(x)-f(y)\right\vert dxdy<\infty,  \label{laboba1}
\end{equation}
where in the expression defining $\left\Vert f\right\Vert _{\ast}$ above$,$
the $\sup$ is taken over all subcubes $Q\subset Q_{0}.$ This given, let us
suppose first that $f\in GaRo_{\infty}(Q_{0}).$ Then, since for any subcube $%
Q\subset Q_{0}$ we have $\{Q\}\in P(Q_{0}),$ it follows that%
\begin{equation*}
\frac{1}{\left\vert Q\right\vert }\int_{Q}\int_{Q}\left\vert
f(x)-f(y)\right\vert dxdy\leq\left\vert Q\right\vert \left\Vert f\right\Vert
_{GaRo_{\infty}(Q_{0})}.
\end{equation*}
Thus, by (\ref{laboba1}),%
\begin{equation*}
\left\Vert f\right\Vert _{\ast}\leq\left\Vert f\right\Vert _{GaRo_{\infty
}(Q_{0})}.
\end{equation*}
Conversely, for any $\{Q_{i}\}_{i\in N}\in P(Q_{0}),$ we can estimate%
\begin{align*}
\dsum \limits_{i}\frac{1}{\left\vert Q_{i}\right\vert }\int_{Q_{i}}%
\int_{Q_{i}}\left\vert f(x)-f(y)\right\vert dxdy & =\dsum
\limits_{i}\left\vert Q_{i}\right\vert \frac{1}{\left\vert Q_{i}\right\vert
^{2}}\int_{Q_{i}}\int_{Q_{i}}\left\vert f(x)-f(y)\right\vert dxdy \\
& \leq\left( \dsum \limits_{i}\left\vert Q_{i}\right\vert \right) \left\Vert
f\right\Vert _{\ast}.
\end{align*}
Whence,%
\begin{equation*}
\left\Vert f\right\Vert _{GaRo_{\infty}(Q_{0})}\leq\left\Vert f\right\Vert
_{\ast}.
\end{equation*}
\end{proof}

Summarizing, just like the $JN_{p}$ conditions, the $GaRo_{p}$ conditions
form a scale joining the weak type Marcinkiewicz $L(p,\infty)$ spaces and $%
BMO.$ Moreover, we have%
\begin{equation*}
JN_{p}\subset GaRo_{p}=L(p,\infty),\text{ for }p\in(1,\infty),
\end{equation*}
and%
\begin{equation*}
JN_{\infty}=GaRo_{\infty}=BMO.
\end{equation*}

\section{Poincar\'{e} Inequalities}

Let $p\in(1,\infty).$ We consider $S_{p}(Q_{0}),$ the class of functions of
functions $f\in L^{1}(Q_{0}),$ such that there exists a constant $c(f)>0,$
and $g\in L^{p}(Q_{0}),$ such that for all subcubes $Q\subset Q_{0},$%
\begin{equation*}
\frac{1}{\left\vert Q\right\vert }\int_{Q}\left\vert f-f_{Q}\right\vert
dx\leq c(f)l(Q)\left\{ \frac{1}{\left\vert Q\right\vert }\int_{Q}\left\vert
g\right\vert ^{p}dx\right\} ^{1/p},
\end{equation*}
where $l(Q)=$ length of the sides of $Q.$

The function $g$ is usually called an upper gradient of $f$ (cf. \cite{hkf}, 
\cite{haj}, \cite{Kalis} and the references therein). As is well known, with
a minor variant of this definition\footnote{%
We need to replace *cubes* by *balls*. For more information on this point
see Remark \ref{remarkmarkao} below.} one can study Poincar\'{e}
inequalities in metric spaces. In particular, the classical Euclidean $(1,p)$
Poincar\'{e} inequalities, correspond to the choice $\left\vert g\right\vert
=\left\vert \nabla f\right\vert .$

\begin{theorem}
(i) Let $1<p<n.$ Suppose that $f\in S_{p}(Q_{0})$ then, $f\in L(p^{\ast
},\infty)(Q_{0}),$ where $\frac{1}{p^{\ast}}=\frac{1}{p}-\frac{1}{n}$

(ii) If $p=n,$ then $f\in S_{n}(Q_{0})$ implies that $f\in
GaRo_{\infty}(Q_{0})=BMO(Q_{0}).$
\end{theorem}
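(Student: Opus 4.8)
The plan is to deduce both parts from the Garsia--Rodemich machinery of Section 2, exploiting the fact that an oscillation bound of the form $\frac1{|Q|}\int_Q|f-f_Q|\,dx\le c(f)\,l(Q)\big(\frac1{|Q|}\int_Q|g|^p\,dx\big)^{1/p}$ immediately controls the double integral $\frac1{|Q|}\int_Q\int_Q|f(x)-f(y)|\,dx\,dy$ that appears in the definition (\ref{dada}) of $GaRo_q$ and, via (\ref{laboba1}), in $BMO$. For part (i) the target is to show $f-f_{Q_0}\in GaRo_{p^{\ast}}(Q_0)$; then (\ref{bela}) of Corollary \ref{coromarkao} (applied with $p$ replaced by $p^{\ast}$, which is legitimate since $1<p<n$ forces $1<p^{\ast}<\infty$) gives $f-f_{Q_0}\in L(p^{\ast},\infty)(Q_0)$, and adding back the constant $f_{Q_0}$ (finite because $f\in L^1(Q_0)$) finishes. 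For part (ii) the target is to verify the pointwise-per-cube characterization (\ref{laboba1}) of $BMO(Q_0)$ directly and then invoke (\ref{barata}).

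First I would record the elementary inequality, valid for every cube $Q\subset Q_0$,
\[
\frac{1}{|Q|}\int_Q\int_Q|f(x)-f(y)|\,dx\,dy\ \le\ 2\int_Q|f-f_Q|\,dx\ \le\ 2c(f)\,|Q|\,l(Q)\Big(\tfrac{1}{|Q|}\int_Q|g|^p\,dx\Big)^{1/p}.
\]
Since $l(Q)=|Q|^{1/n}$, the right-hand side equals $2c(f)\,|Q|^{\,1+1/n-1/p}\big(\int_Q|g|^p\,dx\big)^{1/p}$, and when $1<p<n$ the exponent is exactly $1+\tfrac1n-\tfrac1p=1-\tfrac1{p^{\ast}}=\tfrac1{(p^{\ast})'}$.

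The main step is the summation. Given $\{Q_i\}_{i\in N}\in P(Q_0)$, summing the last display over $i$ gives
\[
\sum_i\frac{1}{|Q_i|}\int_{Q_i}\int_{Q_i}|f(x)-f(y)|\,dx\,dy\ \le\ 2c(f)\sum_i|Q_i|^{1/(p^{\ast})'}\Big(\int_{Q_i}|g|^p\,dx\Big)^{1/p}.
\]
I would then apply H\"older's inequality on the sum with the pair of exponents $(p^{\ast})'$ and $p^{\ast}$: since $\tfrac1{(p^{\ast})'}\cdot(p^{\ast})'=1$, the first factor becomes $\big(\sum_i|Q_i|\big)^{1/(p^{\ast})'}$, which is exactly what (\ref{dada}) for $GaRo_{p^{\ast}}$ demands; and the second factor is $\big(\sum_i(\int_{Q_i}|g|^p)^{p^{\ast}/p}\big)^{1/p^{\ast}}$, which, because $p^{\ast}/p>1$ and the $Q_i$ have pairwise disjoint interiors, is at most $\big(\sum_i\int_{Q_i}|g|^p\big)^{1/p}\le\|g\|_{L^p(Q_0)}$ (superadditivity of $t\mapsto t^{p^{\ast}/p}$, i.e.\ the embedding $\ell^1\hookrightarrow\ell^{p^{\ast}/p}$). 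This yields $\|f-f_{Q_0}\|_{GaRo_{p^{\ast}}(Q_0)}\le 2c(f)\|g\|_{L^p(Q_0)}$, and part (i) follows. The one slightly delicate point is precisely this choice of H\"older exponents: a more naive split---pulling $\|g\|_{L^p}$ out first, or using the pair $p',p$---produces a power of $\sum_i|Q_i|$ strictly below $1/(p^{\ast})'$, which cannot be absorbed for families of small total measure, so the balancing above is essential.

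Part (ii), the endpoint $p=n$, is easier. Here $l(Q)\,|Q|^{-1/n}=1$, so the elementary inequality collapses to $\frac{1}{|Q|^2}\int_Q\int_Q|f(x)-f(y)|\,dx\,dy\le 2c(f)\big(\int_Q|g|^n\,dx\big)^{1/n}\le 2c(f)\|g\|_{L^n(Q_0)}$ for every subcube $Q\subset Q_0$. By the characterization (\ref{laboba1}) this says $\|f\|_{\ast}\le 2c(f)\|g\|_{L^n(Q_0)}<\infty$, i.e.\ $f\in BMO(Q_0)=GaRo_{\infty}(Q_0)$ by (\ref{barata}).
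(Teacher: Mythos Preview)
Your proof is correct and follows essentially the same approach as the paper: the same elementary oscillation-to-double-integral bound, the same H\"older splitting with exponents $(p^{\ast})'$ and $p^{\ast}$, the same $\ell^1\hookrightarrow\ell^{p^{\ast}/p}$ step, and the same appeal to Corollary~\ref{coromarkao}. The only cosmetic difference is in part~(ii), where the paper sums over families in $P(Q_0)$ to verify the $GaRo_\infty$ condition and then invokes~(\ref{barata}), whereas you go directly per cube via~(\ref{laboba1}); these are two sides of the same equivalence and equally short.
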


\begin{proof}
(i) Let $\{Q_{i}\}_{i\in N}\in P(Q_{0}).$ Let $1<p<n,$ and $\frac{1}{p^{\ast}%
}=\frac{1}{p}-\frac{1}{n},\frac{1}{p^{\prime}}=1-\frac{1}{p},\frac{1}{%
(p^{\ast})^{\prime}}=1-\frac{1}{p^{\ast}}.$ Note that since $p<n,$ then $%
p^{\ast}>p,$ and we have $p^{\ast}/p>1.$ Then\footnote{%
In the course of the proof we use the fact that%
\begin{equation*}
\left\Vert \{x_{n}\}\right\Vert _{l^{p^{\ast}/p}}\leq\left\Vert
\{x_{n}\}\right\Vert _{l^{1}}.
\end{equation*}%
},%
\begin{align*}
& \dsum \limits_{i}\frac{1}{\left\vert Q_{i}\right\vert }\int_{Q_{i}}%
\int_{Q_{i}}\left\vert f(x)-f(y)\right\vert dxdy \\
& \leq2\dsum \limits_{i}\int_{Q_{i}}\left\vert f(x)-f_{Q_{i}}\right\vert dx
\\
& =2\dsum \limits_{i}\left\vert Q_{i}\right\vert \frac{1}{\left\vert
Q_{i}\right\vert }\int_{Q_{i}}\left\vert f(x)-f_{Q_{i}}\right\vert dx \\
& \leq2c(f)\dsum \limits_{i}\left\vert Q_{i}\right\vert l(Q_{i})\left\{ 
\frac{1}{\left\vert Q_{i}\right\vert }\int_{Q_{i}}\left\vert g\right\vert
^{p}dx\right\} ^{1/p} \\
& =2c(f)c_{n}\dsum \limits_{i}\left\vert Q_{i}\right\vert
^{1-1/p+1/n}\left\{ \int_{Q_{i}}\left\vert g\right\vert ^{p}dx\right\} ^{1/p}
\\
& =2c(f)c_{n}\dsum \limits_{i}\left\vert Q_{i}\right\vert
^{1-1/p^{\ast}}\left\{ \int_{Q_{i}}\left\vert g\right\vert ^{p}dx\right\}
^{1/p}
\end{align*}%
\begin{align*}
& \leq2c(f)c_{n}\left\{ \dsum \limits_{i}\left\vert Q_{i}\right\vert ^{\frac{%
1}{(p^{\ast})^{\prime}}(p^{\ast})^{\prime }}\right\} ^{1/(p^{\ast})^{\prime}}%
\left[ \left\{ \dsum \limits_{i}\left\{ \int_{Q_{i}}\left\vert g\right\vert
^{p}dx\right\} ^{p^{\ast}/p}\right\} ^{p/p^{\ast}}\right] ^{1/p} \\
& \leq2c(f)c_{n}\left\{ \dsum \limits_{i}\left\vert Q_{i}\right\vert
\right\} ^{1/(p^{\ast})^{\prime}}\left\{ \dsum
\limits_{i}\int_{Q_{i}}\left\vert g\right\vert ^{p}dx\right\} ^{1/p} \\
& \leq2c(f)c_{n}\left\{ \dsum \limits_{i}\left\vert Q_{i}\right\vert
\right\} ^{1/(p^{\ast})^{\prime}}\left\{ \int_{\cup Q_{i}}\left\vert
g\right\vert ^{p}dx\right\} ^{1/p} \\
& \leq2c(f)c_{n}\left\{ \dsum \limits_{i}\left\vert Q_{i}\right\vert
\right\} ^{1/(p^{\ast})^{\prime}}\left\{ \int_{Q_{0}}\left\vert g\right\vert
^{p}dx\right\} ^{1/p}.
\end{align*}
Thus,%
\begin{equation}
\left\Vert f\right\Vert _{GaRo_{p^{\ast}}(Q_{0})}\leq2c(f)c_{n}\left\{
\int_{Q_{0}}\left\vert g\right\vert ^{p}dx\right\} ^{1/p}.  \label{lafrume}
\end{equation}
Now, since%
\begin{align*}
\frac{1}{\left\vert Q\right\vert }\int_{Q}\left\vert
(f-f_{Q_{0}})-(f-f_{Q_{0}})_{Q}\right\vert dx & =\frac{1}{\left\vert
Q\right\vert }\int_{Q}\left\vert f-f_{Q}\right\vert dx \\
& \leq c(f)l(Q)\left\{ \frac{1}{\left\vert Q\right\vert }\int_{Q}\left\vert
g\right\vert ^{p}dx\right\} ^{1/p},
\end{align*}
we see that $g$ is also an upper gradient of $f-f_{Q_{0}}.$ Consequently, (%
\ref{lafrume}) holds for $f-f_{Q_{0}}$ and we find that%
\begin{equation*}
\left\Vert f-f_{Q_{0}}\right\Vert
_{GaRo_{p^{\ast}}(Q_{0})}\leq2c(f)c_{n}\left\{ \int_{Q_{0}}\left\vert
g\right\vert ^{p}dx\right\} ^{1/p}.
\end{equation*}
Applying (\ref{bela}) we finally arrive at 
\begin{equation*}
\left\Vert f-f_{Q_{0}}\right\Vert _{L(p,\infty)}^{\ast}\leq c(n,p,\left\vert
Q_{0}\right\vert )2c(f)c_{n}\left\{ \int_{Q_{0}}\left\vert g\right\vert
^{p}dx\right\} ^{1/p}.
\end{equation*}

(ii) Suppose that $p=n.$ We proceed as in the first part of the proof
noticing that when $p=n,$ we have $1-1/p^{\ast}=1.$ Consequently,%
\begin{align*}
\dsum \limits_{i}\frac{1}{\left\vert Q_{i}\right\vert }\int_{Q_{i}}%
\int_{Q_{i}}\left\vert f(x)-f(y)\right\vert dxdy & \leq2c(f)c_{n}\dsum
\limits_{i}\left\vert Q_{i}\right\vert \left\{ \int_{Q_{i}}\left\vert
g\right\vert ^{n}dx\right\} ^{1/n} \\
& \leq2c(f)c_{n}\left\{ \int_{Q_{0}}\left\vert g\right\vert ^{n}dx\right\}
^{1/n}\dsum \limits_{i}\left\vert Q_{i}\right\vert .
\end{align*}
Therefore,%
\begin{equation*}
\left\Vert f-f_{Q_{0}}\right\Vert
_{GaRo_{\infty}(Q_{0})}\leq2c(f)c_{n}\left\{ \int_{Q_{0}}\left\vert
g\right\vert ^{n}dx\right\} ^{1/n},
\end{equation*}
and we conclude by (\ref{barata}).
\end{proof}

\begin{remark}
The previous result shows that starting with a function in $%
S_{p}(Q_{0}),1<p<n,$ we obtain the (weak type) improvement $f\in
L(p^{\ast},\infty)(Q_{0}).$ Moreover, combining this result with Maz'ya's
self-improvement principle for weak type inequalities for the gradient%
\footnote{%
To the effect that `weak type\textquotedblright\ implies \textquotedblleft
strong type".} (cf. \cite{haj}) we obtain (the strong type) improvement: if $%
f\in S_{p}(Q_{0})$ then $f\in L^{p^{\ast}}(Q_{0})$ or even $%
L(p^{\ast},p)(Q_{0})$ (cf. \cite{mamicalixto})$.$ Since we have nothing new
to add to the known methods used to show how to self improve from weak type
to strong type, we shall not consider this issue here and refer to \cite{ta}%
, \cite{haj}, \cite{mamipus}, \cite{mamicalixto}, and the references therein.
\end{remark}

\section{Final Comments and Problems}

We will show some connections with other approaches to the self improvement
of Sobolev-Poincar\'{e} inequalities.

\subsection{Poincar\'{e} inequalities, maximal inequalities and
rearrangements}

There is a close connection between Sobolev-Poincar\'{e} inequalities,
rearrangement inequalities for gradients, weak type inequalities and maximal
operators. Consequently all of the above can be expressed in terms of
Garsia-Rodemich conditions. In this section we shall briefly explore some of
these interconnections.

Let $Q_{0}$ be a fixed cube on $R^{n}.$ Suppose that $f\in S_{1}(Q_{0}).$
Therefore, there exists a constant $c(f)\geq 0,$ and $g\in L^{1}(Q_{0}),$
such that for all $Q\subset Q_{0}$ subcubes of $Q_{0},$ we have%
\begin{equation}
\frac{1}{\left\vert Q\right\vert }\int_{Q}\left\vert f(x)-f_{Q}\right\vert
dx\leq c(f)\frac{\left\vert Q\right\vert ^{1/n}}{\left\vert Q\right\vert }%
\int_{Q}\left\vert g(x)\right\vert dx.  \label{vale1}
\end{equation}%
We now reproduce the argument in \cite{Kalis}. We first note that if (\ref%
{vale1}) holds then,%
\begin{align*}
f_{1/n}^{\#}(x)& :=\sup_{Q\backepsilon x}\frac{1}{\left\vert Q\right\vert
^{1+1/n}}\int_{Q}\left\vert f(x)-f_{Q}\right\vert dx \\
& \leq c(f)\sup_{Q\backepsilon x}\frac{1}{\left\vert Q\right\vert }%
\int_{Q}g(x)dx=c(fMg(x),
\end{align*}%
where $M$ is the maximal operator of Hardy-Littlewood. Consequently, by a
modification of an argument of \cite{bs}, we find%
\begin{align*}
f_{1/n}^{\#\ast }(t)& \leq C_{n}\left( Mg\right) ^{\ast \ast }(t) \\
& =C_{n}g^{\ast \ast }(t).
\end{align*}%
As a consequence (cf. \cite{bs}, \cite{corita}) we obtain a version of a
well known rearrangement inequality for the gradient (cf. \cite{kol}, \cite%
{bmr}, \cite{Kalis} and the references therein) 
\begin{equation}
f^{\ast \ast }(t)-f^{\ast }(t)\leq c_{n}t^{1/n}g^{\ast \ast }(t),\text{ for }%
0<t<\left\vert Q_{0}\right\vert /2.  \label{la ser3}
\end{equation}%
Note that (\ref{la ser3}) yields a weak type form of the Gagliardo-Nirenberg
inequality. Indeed, we can rewrite (\ref{la ser3}) as 
\begin{equation*}
\left( f^{\ast \ast }(t)-f^{\ast }(t)\right) \leq c_{n}t^{1/n}\frac{1}{t}%
\int_{0}^{t}g^{\ast }(s)ds,\text{ }0<t<\left\vert Q_{0}\right\vert /2,
\end{equation*}%
which readily implies (cf. \cite{milbmo})%
\begin{equation*}
\sup_{t>0}\left( f^{\ast \ast }(t)-f^{\ast }(t)\right) t^{1/n^{\prime }}\leq
c_{n}\left\Vert g\right\Vert _{L^{1}(Q_{0})}+\left( \frac{\left\vert
Q_{0}\right\vert }{2}\right) ^{\frac{1}{n^{\prime }}-1}\left\Vert
f\right\Vert _{L^{1}(Q_{0})}.
\end{equation*}%
It follows that (cf. the proof of Corollary \ref{coromarkao} above)%
\begin{equation*}
\left\Vert f-f_{Q_{0}}\right\Vert _{L(n^{\prime },\infty )}^{\ast }\leq
c(n,\left\vert Q_{0}\right\vert )\left[ c(f)\left\Vert g\right\Vert
_{L^{1}(Q_{0})}+\left( \frac{\left\vert Q_{0}\right\vert }{2}\right) ^{\frac{%
1}{n^{\prime }}-1}\left\Vert f-f_{Q_{0}}\right\Vert _{L^{1}(Q_{0})}\right] 
\end{equation*}%
and since by (\ref{vale1})%
\begin{align*}
\left\Vert f-f_{Q_{0}}\right\Vert _{L^{1}(Q_{0})}& \leq c(f)\left\vert
Q_{0}\right\vert ^{1/n}\int_{Q_{0}}\left\vert g(x)\right\vert dx \\
& =c(f)\left\vert Q_{0}\right\vert ^{1/n}\left\Vert g\right\Vert
_{L^{1}(Q_{0})},
\end{align*}%
we finally arrive at%
\begin{equation*}
\left\Vert f-f_{Q_{0}}\right\Vert _{L(n^{\prime },\infty )}^{\ast }\leq
c(n)\left\Vert g\right\Vert _{L^{1}(Q_{0})}.
\end{equation*}%
Of course this weak type version of the Gagliardo-Nirenberg inequality can
be now rewritten using Garsia-Rodemich conditions via (\ref{bela}).

The analysis for $(q,p)$ Poincar\'{e} inequalities follows the same pattern
(cf. \cite{Kalis}). For example, suppose that $f\in S_{p}(Q_{0}).$ Then
there exists $c(f)>0,$ and $g$ upper gradient of $f$, such that for all
subcubes $Q\subset Q_{0},$%
\begin{equation*}
\left( \frac{1}{\left\vert Q\right\vert }\int_{Q}|f(x)-f_{Q}|^{q}dx\right)
^{1/q}\leq c(f)\left\vert Q\right\vert ^{1/n}\left( \frac{1}{\left\vert
Q\right\vert }\int_{Q}g^{p}(x)dx\right) ^{1/p}.
\end{equation*}%
Then%
\begin{equation*}
t^{\frac{1}{n}}\left( \frac{1}{t}\int \left[ f^{\ast }(s)-f^{\ast }(s)\right]
^{q}ds\right) ^{1/q}\leq Cc(f)[(g^{p})^{\ast \ast }(t)]^{1/p},\text{for }%
0<t<\left\vert Q_{0}\right\vert /2.
\end{equation*}%
Which again can be rewritten as a Sobolev-Poincar\'{e} weak type inequality.

\begin{remark}
\label{remarkmarkao}As was shown in \cite{Kalis} the analysis above holds in
the general setting of metric spaces provided with a doubling measure. In
particular, on a doubling metric measure space $(X,\mu)$ there exists $s=$
doubling order, or homogeneous dimension, such that for each ball $B\subset
X $%
\begin{equation*}
\mu(B)\geq cr(B)^{s},\text{ where }r(B)\text{ is the radius of }B.
\end{equation*}
The corresponding rearrangement inequality associated with the Poincar\'{e}
inequality%
\begin{equation*}
\frac{1}{\mu(B)}\int_{B}\left\vert f(x)-f_{B}\right\vert d\mu(x)\leq c\frac{%
\mu(B)^{1/s}}{\mu(B)}\int_{B}\left\vert g(x)\right\vert d\mu(x),
\end{equation*}
takes the form (cf. \cite{Kalis})%
\begin{equation*}
f^{\ast\ast}(t)-f^{\ast}(t)\leq c_{n}t^{1/s}g^{\ast\ast}(t),\text{ for }%
0<t<\mu(X)/2.
\end{equation*}
\end{remark}

\begin{remark}
It could be of interest to investigate the connection of the homogenous
dimension of $X$ and measures of the form $wd\mu,$ with $w$ in a Muckenhoupt
class of weights. Given the connection between $BMO$ and the $A_{p}$ classes
(cf. \cite{coif}) it would be interesting to study the connection with $BMO$
and isoperimetry. Recently, an interesting connection between isoperimetry
and $BMO$ was uncovered in \cite{bre}.
\end{remark}

\subsection{K-functional connection}

Due to its connection to maximal operators, the $K-$functional of
interpolation theory (cf. \cite{bs}) is a good tool to study self-improving
inequalities involving averages. For example, the well known equivalence of
Herz-Stein to the effect that the maximal operator of Hardy Littlewood, $M,$
can be estimated by%
\begin{equation*}
\left( Mf\right) ^{\ast}(t)\approx f^{\ast\ast}(t),
\end{equation*}
can be effectively used to prove self improving inequalities connected with
reverse H\"{o}lder inequalities (cf. \cite{milfen}, \cite{masmil}, \cite%
{mami}, and the references therein). Moreover, this is immediately connected
with the computation of the $K-$functional for the pair $(L^{1},L^{\infty}):$%
\begin{equation*}
K(t,f;L^{1},L^{\infty})=tf^{\ast\ast}(t).
\end{equation*}
In this context Gehring's self-improving inequalities can be formulated as
differential inequalities connected with the reiteration formulae of
Holmstedt (cf. \cite{milfen}).

Likewise, we believe that suitable reformulations of the $K-$functional for
the pair or the pair $(L^{1},BMO)$ (cf. \cite{bs}, \cite{corita}) can be
used to reformulate some of the results considered in this note in terms of
differential inequalities, via reiteration (cf. \cite{milfen}).

\textbf{Acknowledgement.} We are grateful to the referee for a number of
suggestions to improve the presentation of the paper. The author was
partially supported by a grant from the Simons Foundation (\TEXTsymbol{%
\backslash}\#207929 to Mario Milman)

\end{document}